\documentclass{article}

\usepackage{amsmath,amsthm,amsfonts,amssymb,amscd,cite,graphicx,subcaption}
\usepackage{authblk}

\newtheorem{definition}{Definition}[section]
\newtheorem{theorem}{Theorem}[section]
\newtheorem{remark}{Remark}[section]

\begin{document}

\title{On Directed Graphs with the Same Sum over Arborescence Weights}

\author[1,2]{Sayani Ghosh}
\author[3]{Bradley S. Meyer}

\affil[1]{Schmid College of Science and Technology, Chapman University,
Orange, CA, USA} 
\affil[2]{Institute of Quantum Studies, Chapman University,
Orange, CA, USA}
\affil[3]{Department of Physics and Astronomy, Clemson University,
Clemson, SC, USA}

\maketitle








 \begin{abstract}
 \noindent
We show that certain digraphs with the same vertex set but different
arc sets have the same sum over the weights of all arborescences with a
given root vertex.  We relate our results to the Matrix-Tree Theorem and
show how they
provide a graphical approach for factoring matrix determinants.
 \\[2mm]
 \end{abstract}

\section{Introduction}

A directed graph (digraph) $\Gamma$ is a set of vertices and a set of arcs,
which are ordered pairs of vertices.
In particular, an arc $e = (i,j)$
is an arrow directed from vertex $i$ to vertex $j$, where $i$ and $j$ are
both elements of the vertex set of $\Gamma$.  The source of this arc $e$ is
$s(e) = i$ while the target is $t(e) = j$.
An arborescence on $\Gamma$ is a subgraph of $\Gamma$
that is a spanning directed tree.  The {\em root vertex}
of the arborescence is the one vertex in the subgraph
that has indegree zero.  All other vertices have indegree exactly
one.  If the digraph is weighted, an arborescence
has weight equal to the product of the weights of the arcs in the
arborescence.

We next define $\Gamma_v$ to be a digraph
such that vertex $v$ has no in arcs; that is, no arc in $\Gamma_v$ has
$v$ as its target.  Since $\Gamma_v$ has
no in arcs to $v$, and since an arborescence is a subgraph of $\Gamma_v$
that is a spanning directed tree,
any arborescence in $\Gamma_v$ must have $v$ as the root vertex.
We thus call $v$ the root vertex of the digraph $\Gamma_v$.

In this paper, we show that certain digraphs with the same vertex set but different arc sets have the same sum over the weights of all arborescences.  While the graphs we investigate are only a small subset of all graphs with the same sum over arborescence weights, they are easily identified from the graphs themselves.  By the Matrix-Tree Theorem, this enables straightforward manipulation of a weighted digraph while leaving the determinant of the corresponding matrix unchanged from that corresponding to the original graph.

\section{Main Results}

Certain digraphs with arcs having the same target and weight but different sources can have the same sum over arborescence weights.

\begin{theorem}[Moving-Arc Theorem]
Consider a directed graph $\Gamma_v$ with vertex set $V$ and
arc set ${\cal A}$ and with root vertex $v \in V$.
Consider further that $\{a, b, c\} \in V$ and
that ${\cal A} = D \cup e$, where $D$ is a set of arcs and $e$ is an arc
with weight $w(e)$ such that $s(e) = a$ and $t(e) = b$.
Suppose another digraph $\Gamma_v'$ has vertex set $V' = V$,
root vertex $v \in V'$,
arc set ${\cal A'} = D' \cup e'$ such that $D' = D$, $s(e') = c$,
$t(e') = b$, and $w(e') = w(e)$.
The sum of all arborescence weights in $\Gamma_v'$ will be the same as
the sum of all arborescence weights in $\Gamma_v$ if $a$ and $b$ are not
strongly connected in $\Gamma_v$ and $c$ and $b$ are not strongly connected
in $\Gamma_v'$.
\label{theorem:move}
\end{theorem}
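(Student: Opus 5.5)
The plan is to split the arborescences of each digraph according to whether they use the distinguished arc. Arborescences that do not use the arc will be literally the same in $\Gamma_v$ and $\Gamma_v'$. Arborescences that do use it will be put in weight-preserving bijection (up to the common factor $w(e)=w(e')$) with one auxiliary family of forests, and that family does not depend on the source vertex $a$ or $c$.

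\textbf{Step 1 (arborescences avoiding the moved arc).} An arborescence of $\Gamma_v$ not containing $e$ is a spanning arborescence rooted at $v$ whose arcs all lie in $D$. The same description holds for arborescences of $\Gamma_v'$ not containing $e'$, because $D'=D$. So these two sums coincide, and it remains to compare the arborescences containing $e$ with those containing $e'$.

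\textbf{Step 2 (arborescences through $e$ as two-component forests).} Let $T$ be an arborescence of $\Gamma_v$ containing $e=(a,b)$. Since $b$ has indegree one in $T$, no other arc of $T$ enters $b$. Deleting $e$ therefore leaves a spanning forest $F\subseteq D$ with exactly two trees: one rooted at $v$, and one rooted at $b$ consisting of the vertices that $T$ reaches through $b$. Moreover $a$ lies in the tree rooted at $v$, since otherwise $T$ would contain a cycle through $e$. Conversely, take any spanning forest $F\subseteq D$ made of an out-tree rooted at $v$ and an out-tree rooted at $b$, with $a$ in the $v$-tree. Then $F\cup\{e\}$ is an arborescence of $\Gamma_v$ rooted at $v$. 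This gives a bijection with $w(T)=w(e)\,w(F)$. Let $\mathcal{F}_b$ denote the set of all such two-tree forests in $D$ with roots $v$ and $b$, with no condition on where $a$ lies.

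\textbf{Step 3 (using the strong-connectivity hypothesis; the key step).} We need the condition ``$a$ lies in the $v$-tree'' to be automatic on $\mathcal{F}_b$. Every vertex in the $b$-tree of $F\in\mathcal{F}_b$ is reachable from $b$ by a directed path in $D$. Since $e$ already gives a path $a\to b$, the hypothesis that $a$ and $b$ are not strongly connected in $\Gamma_v$ means there is no directed path from $b$ to $a$ in $\Gamma_v$. In particular there is none in $D$. Hence $a$ never lies in the $b$-tree, and the arborescences through $e$ biject with all of $\mathcal{F}_b$. Their total weight is therefore $w(e)\sum_{F\in\mathcal{F}_b}w(F)$.

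The identical argument applies to $\Gamma_v'$, with $c$ in place of $a$. One small point needs checking. A shortest path from $b$ to $c$ in $\Gamma_v'$ never revisits $b$, so it cannot use $e'$, which enters $b$. Therefore ``no path from $b$ to $c$ in $\Gamma_v'$'' is equivalent to ``no path from $b$ to $c$ in $D$''. The arborescences of $\Gamma_v'$ containing $e'$ then have total weight $w(e')\sum_{F\in\mathcal{F}_b}w(F)$. Since $w(e')=w(e)$ and the family $\mathcal{F}_b$ depends only on $D$, $v$ and $b$, the two contributions agree. Combined with Step 1, this proves the theorem.

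I expect the only delicate point to be this translation of ``not strongly connected'' into ``no path from $b$ back to the source inside $D$''. In particular, one must make sure that the moved arc itself cannot create or destroy such a path. The bijection in Step 2 is routine.
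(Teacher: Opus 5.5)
Your proof is correct and is essentially the paper's argument: the paper maps each arborescence $H\cup e$ directly to $H\cup e'$ and rules out a cycle because there is no path from $b$ back to the new source. That is exactly your bijection $T\mapsto T\setminus\{e\}\mapsto (T\setminus\{e\})\cup\{e'\}$, routed through the two-tree forests $\mathcal{F}_b$, which makes explicit that this contribution does not depend on the source; your shortest-path remark is harmless but unnecessary, since you only use the trivial direction that no $b$-to-$c$ path in $\Gamma_v'$ implies none in $D$.
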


\begin{proof}
Since $e \in {\cal A}$ and $e' \in {\cal A'}$, and since $t(e) = b$ and
$t(e') = b$, $b$ is not the root vertex of $\Gamma_v$ or $\Gamma_v'$;
thus,
an arborescence in $\Gamma_v$ and in $\Gamma_v'$ must include an in arc to $b$.
That in arc may or may not be $e$ in $\Gamma_v$ and $e'$ in $\Gamma_v'$.
Consider the set of arborescences in $\Gamma_v$ that do not include $e$.
All arcs in these arborescences come only from arc set $D$.
This set of arborescences is identical to the set of arborescences in
$\Gamma_v'$ that do not include $e'$ since all arcs in these arborescences
come only from arc set $D'$, which is identical to $D$.

Now consider an arborescence $S$ in $\Gamma_v$ that includes $e$.
The arcs in $S$ are $H \cup e$, where $H \subset D$.
There is a unique corresponding subgraph $S'$ in $\Gamma_v'$
with arcs $H \cup e'$.  Since the only difference between $S$ and $S'$
will be the in arc to $b$,
and since the indegree of vertex $b$ is one in both $S$ and $S'$,
$S'$ will be an arborescence if there is no cycle in $S'$.
Since $S$ is an arborescence, there is no cycle among the set of arcs $H$.
$e'$ provides a path from $c$ to $b$;
thus, if there is no path from $b$ to $c$ in $S'$, $S'$ will be an arborescence.
If there is no path from $b$ to $c$ in $\Gamma_v'$,
then there can be no path
from $b$ to $c$ in $S'$ and, thus, $S'$ is guaranteed to be an arborescence.
Thus, if $b$ and $c$ are not strongly connected in $\Gamma_v'$,
for each arborescence $S$ in $\Gamma_v$ that includes $e$,
there will be a unique corresponding arborescence $S'$ in $\Gamma_v'$
that includes $e'$.

Consider now an arborescence $S'$ in $\Gamma_v'$ that includes $e'$.
There is a subgraph $S$ in $\Gamma_v$ with $e$ replacing $e'$.
By the same argument as in the previous paragraph, since $s(e) = a$
and $t(e) = b$, $S$ is guaranteed to be an arborescence in $\Gamma_v$
if $a$ and $b$ are not strongly connected in $\Gamma_v$.
Thus, if $a$ and $b$ are not strongly connected in $\Gamma_v$,
for each arborescence $S'$ in $\Gamma_v'$ that includes $e'$,
there will be a unique arborescence $S$ in $\Gamma_v$ that includes $e$.

These arguments establish that if $a$ and $b$ are not strongly connected
in $\Gamma_v$ and $c$ and $b$ are not strongly connected in $\Gamma_v'$,
then there is a one-to-one correspondence between arborescences in $\Gamma_v$
and $\Gamma_v'$.
Arborescences in $\Gamma_v$ that do not include $e$ have an identical
corresponding arborescence in $\Gamma_v'$.  These arborescences have the
same weight.  For each arborescence in $\Gamma_v$ that includes $e$,
there is a unique corresponding arborescence in $\Gamma_v'$ that includes $e'$.
Since these arborescences only differ
in the arcs $e$ and $e'$, and since $w(e') = w(e)$, these arborescences
have the same weight.  Thus, each arborescence in $\Gamma_v$ has the
same weight as its corresponding arborescence in $\Gamma_v'$ and
the sum of the arborescence weights is thus the same in $\Gamma_v$
and $\Gamma_v'$.
\end{proof}

\begin{remark}
In Theorem \ref{theorem:move}, $\Gamma_v$ and $\Gamma_v'$ are
distinct digraphs.  It is conceptually convenient, however,
to consider $\Gamma_v'$ as the same as $\Gamma_v$ except that we have
moved the source of arc $e$ from vertex $a$ to vertex $c$ but kept
the target vertex $b$ the same.
In this way, Theorem \ref{theorem:move} shows that we leave the
sum of arborescence weights of a digraph $\Gamma_v$ with root vertex $v$
unchanged when we move the source of arc $e$ to a new source,
as long as the source vertex and target vertex of $e$
are not strongly connected in the digraph before and after the move.
This is the origin of our name for the theorem.
\label{remark:move}
\end{remark}


A directed graph may have parallel arcs.  Those arcs can be combined to
leave the sum over arborescence weights unchanged.

\begin{theorem}[Combining-Arcs Theorem]
Consider a digraph $\Gamma_v$ with vertex set $V$, arc set ${\cal A}$,
and root vertex $v \in V$.  Consider further
that there are parallel arcs $e_1 \in {\cal A}$
and $e_2 \in {\cal A}$ with weights $w(e_1)$ and $w(e_2)$
with $s(e_1) = s(e_2) = a \in V$ and $t(e_1) = t(e_2) = b \in V$.
From this, ${\cal A} = D \cup \{e_1, e_2\}$; thus, $D$ is the set of
all arcs in ${\cal A}$ other than $e_1$ and $e_2$.
Now consider a second digraph $\Gamma_v'$ with vertex set $V' = V$,
root vertex $v \in V'$, and arc set ${\cal A} = D' \cup e_c$ with $D' = D$ and
$s(e_c) = a$, $t(e_c) = b$, and $w(e_c) = w(e_1) + w(e_2)$.
The sum over all arborescences in $\Gamma_v$ is equal to the sum over all arborescences in  $\Gamma_v'$.
\label{theorem:add}
\end{theorem}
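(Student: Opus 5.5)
We argue by partitioning arborescences according to which arc, if any, enters $b$ from the parallel pair, and compare weights class by class, as in the proof of Theorem \ref{theorem:move}. First, if $b = v$ there is nothing to prove. Since $\Gamma_v$ has no in arcs to $v$ and $e_1, e_2$ have target $b$, we must have $b \neq v$. Every arborescence of $\Gamma_v$ therefore contains exactly one in arc to $b$. Because $e_1$ and $e_2$ both have target $b$, an arborescence contains at most one of them. So the arborescences of $\Gamma_v$ split into three disjoint classes: (i) those containing neither $e_1$ nor $e_2$, (ii) those containing $e_1$, and (iii) those containing $e_2$. Likewise, the arborescences of $\Gamma_v'$ split into (i$'$) those not containing $e_c$ and (ii$'$) those containing $e_c$.

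Class (i) consists of arborescences using only arcs of $D$. Since $D' = D$, it coincides with class (i$'$), with identical weights. For the other classes, take any $H \subset D$ and consider the three arc sets $H \cup e_1$, $H \cup e_2$ and $H \cup e_c$. These have the same underlying vertex pairs, since each of $e_1, e_2, e_c$ goes from $a$ to $b$. Being an arborescence with root $v$ depends only on the indegrees and on the absence of cycles in the underlying directed structure. Hence $H \cup e_1$ is an arborescence of $\Gamma_v$ if and only if $H \cup e_2$ is, if and only if $H \cup e_c$ is an arborescence of $\Gamma_v'$. This gives bijections between class (ii), class (iii) and class (ii$'$), all indexed by the same family $\mathcal{H}$ of admissible sets $H$. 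Unlike in Theorem \ref{theorem:move}, no strong-connectivity hypothesis is needed, because the source is unchanged.

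Finally, we sum the weights:
\[
\sum_{H \in \mathcal{H}} w(H)\,w(e_1) + \sum_{H \in \mathcal{H}} w(H)\,w(e_2) = \sum_{H \in \mathcal{H}} w(H)\bigl(w(e_1)+w(e_2)\bigr) = \sum_{H \in \mathcal{H}} w(H)\,w(e_c),
\]
where $w(H)$ denotes the product of the weights of the arcs in $H$. The left side is the total weight of classes (ii) and (iii), and the right side is the total weight of class (ii$'$). Adding the equal contributions of classes (i) and (i$'$) gives the theorem. The only step needing care is the claim that the arborescence property depends solely on the endpoints of the arcs and not on their identities. This follows directly from the definition of an arborescence as a spanning directed tree in which the root has indegree zero and every other vertex has indegree one.
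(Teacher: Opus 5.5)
Your proposal is correct and follows essentially the same route as the paper: you split the arborescences by whether they use $e_1$, $e_2$, or neither, match $H\cup e_1$ and $H\cup e_2$ with $H\cup e_c$, and then use $w(e_c)=w(e_1)+w(e_2)$. You are slightly more explicit than the paper in noting that an arborescence contains at most one of $e_1,e_2$ and that whether $H$ is admissible depends only on arc endpoints; your opening remark about $b=v$ is redundant, since $b\neq v$ is forced.
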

\begin{proof}
Consider an arborescence $S_1$ in $\Gamma_v$ that consists of arcs
$H \cup e_1$, where $H \subset D$.  There is exactly one other arborescence
$S_2$ in $\Gamma_v$ that consists of arcs $H \cup e_2$.  These arborescences
contribute weight $W(H) (w(e_1) + w(e_2))$ to the sum over all arborescences
in $\Gamma_v$, where $W(H)$ is the product of the weights of all arcs in $H$.
For each such pair of arborescences in $\Gamma_v$, there
is exactly one arborescence $S'$ in $\Gamma_v'$ with arcs $H \cup e_c$ and
weight $W(H) w(e_c)$.  The combined weights of $S_1$ and $S_2$ equal that
of $S'$ since $w(e_c) = w(e_1) + w(e_2)$.  For each such $H$ in $\Gamma_v$
and $\Gamma_v'$,
the combined weight of $S_1 = H \cup e_1$ and $S_2 = H \cup e_2$
equals the weight of the corresponding
$S' = H \cup e_c$ in $\Gamma_v'$; thus,
the sum over all arborescence weights with combined arcs $e_1$ and $e_2$
in $\Gamma_v$ equals the sum over all arborescence weights with arc $e_c$
in $\Gamma_v'$.  The sum over all arborescence weights in $\Gamma_v$ not
involving $e_1$ or $e_2$ is exactly the same as the sum over all arborescences
in $\Gamma_v'$ not involving $e_c$; thus, the sum over all arborescence
weights in $\Gamma_v$ is equal to the sum over all arborescence weights in
$\Gamma_v'$.
\end{proof}

\begin{remark}
    The digraphs $\Gamma_v$ and $\Gamma_v'$ in Theorem \ref{theorem:add} are distinct.  As in Remark \ref{remark:move}, it is conceptually convenient to think of $\Gamma_v'$ as the same as $\Gamma_v$ except that we have combined arcs $e_1$ and $e_2$ into a single arc with weight equal to the sum of the weights of the two original arcs.  We thus call Theorem \ref{theorem:add} the Combining-Arcs Theorem.
    \label{remark:combine}
\end{remark}

\section{Relation to the Matrix-Tree Theorem}

The matrix-tree theorem relates sums over arborescences in digraphs
to matrix determinants (e.g., \cite{chen2012applied,doi:10.1137/19M1265193,MOON1994163}).  For weighted digraphs, we state the theorem
in the following form \cite{ghosh2023digraph}.
\begin{theorem}
Let $A = [a_{ij}]$ denote the $n \times n$ matrix
such that
\begin{equation}
a_{ij} = 
\begin{cases}
-u_{ij}, & i\ne j, 1 \leq i, j \leq n\\
 \sum_{k = 1}^n u_{kj}, & i = j, 1 \leq i \leq n
\end{cases}
\label{eq:aij}
\end{equation}
Construct a digraph $\Gamma_0$ with $n + 1$ vertices labeled $0, 1, ..., n$
with $0$ the root vertex.
An arc $e = (i,j)$ in the graph has weight $w(e) = u_{ij}$ for $i \ne 0$ while
an arc $e = (0,j)$ has weight $w(e) = u_{jj}$.  Then
\begin{equation}
det(A) = \sum_{B \in S(\Gamma_0)} W(B)
\end{equation}
where $S(\Gamma_0)$ is the set of all arborescences in $\Gamma_0$ and
\begin{equation}
W(B) = \prod_{e \in B} w(e)
\label{theorem:matrix-tree}
\end{equation}
\end{theorem}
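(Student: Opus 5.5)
The plan is to reduce this weighted form to the classical Matrix-Tree Theorem by the standard ``grounding'' construction. First we read the matrix correctly. Write $A = L$, where $L$ is the in-degree Laplacian of the augmented digraph $\Gamma_0$, with column $j$ indexing the target vertex. Each diagonal entry is $a_{jj} = u_{jj} + \sum_{k\ne j, k\ge 1} u_{kj}$, which is the total weight of all arcs entering $j$ in $\Gamma_0$. These arcs are the arcs $(k,j)$ from $k \ge 1$ with weights $u_{kj}$, together with the arc $(0,j)$ of weight $u_{jj}$. The off-diagonal entries $-u_{ij}$ are minus the weight of arc $(i,j)$.

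So $A$ is exactly the $(n+1)\times(n+1)$ Laplacian $\mathcal{L}$ of $\Gamma_0$, defined by $\mathcal{L}_{ij} = -w(i,j)$ for $i\ne j$ and $\mathcal{L}_{jj} = \sum_i w(i,j)$, with the row and column of vertex $0$ deleted. Vertex $0$ has no in arcs, so its column is zero anyway. It therefore suffices to prove that this reduced Laplacian has determinant equal to the sum of the weights of the arborescences rooted at $0$.

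I would prove that identity directly, rather than citing it, by a column-expansion argument. Treat $\det(A)$ as a polynomial in the arc weights. Each column $j$ of $A$ is $\sum_{i} w(i,j)\,(\mathbf{e}_j - \mathbf{e}_i)$, with $\mathbf{e}_0 := 0$. Multilinearity in the columns then gives
\[
\det(A) = \sum_{f} \prod_{j=1}^n w(f(j),j)\,\det\bigl[\mathbf{e}_j - \mathbf{e}_{f(j)}\bigr]_{j=1}^n ,
\]
where the sum is over all choices $f$ assigning to each non-root vertex $j$ one in-arc source $f(j)\ne j$. Each such $f$ is a subgraph in which every vertex $1,\dots,n$ has indegree one. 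Following parents from any vertex, such a subgraph either reaches $0$ from every vertex, in which case it is an arborescence rooted at $0$, or it contains a directed cycle.

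The main step is to show that $\det[\mathbf{e}_j - \mathbf{e}_{f(j)}]$ equals $1$ in the first case and $0$ in the second.
\begin{itemize}
\item If there is a cycle $j_1\to j_2\to\cdots\to j_m\to j_1$, then the columns indexed by $j_1,\dots,j_m$ sum to zero, so the determinant vanishes.
\item If $f$ is an arborescence, order the vertices so that every parent precedes its child (for example by distance from $0$). In that ordering the matrix is unit upper triangular: each column $j$ has a $1$ on the diagonal and a $-1$ only in the earlier row $f(j)$, or no other entry when $f(j)=0$. Hence the determinant is $1$. Simultaneously permuting rows and columns does not change the determinant.
\end{itemize}
Summing over $f$ then leaves exactly $\sum_{B} W(B)$.

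The only delicate point is the bookkeeping between the diagonal weight $u_{jj}$ and the arc $(0,j)$. We must verify that the diagonal of $A$ contains precisely the root arc's weight plus the other in-weights, so that no arc is double counted. I expect this to be straightforward once the column decomposition above is written out.
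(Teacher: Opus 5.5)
Your proof is correct and complete. Writing column $j$ as $\sum_{i\ne j} w(i,j)(\mathbf{e}_j-\mathbf{e}_i)$ with $\mathbf{e}_0=0$ already reproduces both the off-diagonal entries $-u_{ij}$ and the diagonal entry $u_{jj}+\sum_{k\ne j}u_{kj}$, so the bookkeeping point you flag at the end is already settled. The telescoping of columns around a cycle, together with the unit triangular form under a parents-first relabelling, then shows that exactly the acyclic in-arc choices survive, each with coefficient $1$, and these are precisely the arborescences rooted at $0$.

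The paper differs only in that it gives no proof at all. It quotes the Matrix-Tree Theorem in this convention (root arcs $(0,j)$ carrying the diagonal weights $u_{jj}$) from its earlier work and the standard literature, and uses it as a black box to transfer the Moving-Arc and Combining-Arcs Theorems to determinants. The citation buys brevity. Your argument buys self-containment, and it also exposes the mechanism behind the paper's own results:
\begin{itemize}
\item If you run your expansion over individual arcs rather than sources, it works verbatim for multigraphs. The Combining-Arcs Theorem then becomes plain linearity in column $b$, since parallel arcs $e_1,e_2$ from $a$ to $b$ contribute $(w(e_1)+w(e_2))(\mathbf{e}_b-\mathbf{e}_a)$ to that column.
\item Since $\det(A)$ is the sum of $\prod_j w(f(j),j)$ over acyclic $f$ only, moving the source of an arc into $b$ leaves the determinant unchanged whenever the move turns no acyclic in-arc choice into a cyclic one or vice versa. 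That is exactly what the strong-connectivity hypotheses of the Moving-Arc Theorem are there to guarantee.
\end{itemize}
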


\begin{remark}
By Remark \ref{remark:move}, we leave the sum over all arborescence
weights unchanged
when we move an arc $e$ in the digraph $\Gamma_0$ if the
source and target of $e$ are not strongly connected before and after the move.
By Theorem \ref{theorem:matrix-tree}, the sum over all arborescence weights
in digraph $\Gamma_0$ is the determinant of the matrix corresponding to
$\Gamma_0$.  Thus, the determinant of the matrix corresponding to the
digraph $\Gamma_0$ is left unchanged when we move an arc $e$ in the digraph
if the source and target of $e$ are not strongly connected before and
after the move.
\label{remark:correspond}
\end{remark}

\begin{remark}
Matrix determinants are left unchanged by addition of a factor times
a row or column to another row or column
in the original matrix; thus, Remark \ref{remark:correspond} indicates that
moving an arc in $\Gamma_0$ such that the determinant of the corresponding
matrix $A$ remains unchanged corresponds to a set of row or column additions
in $A$.
\label{remark:final_move}
\end{remark}

\begin{remark}
The Laplacian matrix $L$ for a graph is defined as $L=D-A'$, where $D$ is the diagonal degree matrix and $A'$ is the adjacency matrix. In weighted multi-digraphs, each element $L_{ij}$ is
formed by summing the weights of arcs that have source $i$ and target $j$ \cite{chebotarev2006matrixforest}.  Elements $L_{ii}$ include contributions from loops or, in our version of the matrix-tree theorem, from arcs $(0, i)$.  Theorem \ref{theorem:add} implies the invariance of $det(L)$ under merging of parallel arcs in the graph described by $L$ and justifies the merging operation for matrix-tree-theorem and matrix-forest-theorem analyses \cite{chebotarev2006matrixforest, doi:10.1137/19M1265193}.

\label{remark:Laplacian}    
\end{remark}

As an example of our results, consider the upper-triangular matrix $M$ given
by
\begin{equation}
    A = \begin{pmatrix}
u_{11} & -u_{12} & -u_{13}\\
0 & u_{12} + u_{22} & -u_{23} \\
0  & 0 & u_{13} + u_{23} + u_{33}
\end{pmatrix}
\label{eq:A_ex}
\end{equation}
The determinant is the product of the diagonal elements: $u_{11}(u_{12} + u_{22})(u_{13} + u_{23} + u_{33})$.
The digraph $\Gamma_0$ corresponding to the matrix is shown in
Fig. \ref{fig:upper-triangular-digraph}.
\begin{figure}[!h]
   \centering
   \includegraphics[width=0.25\textwidth]{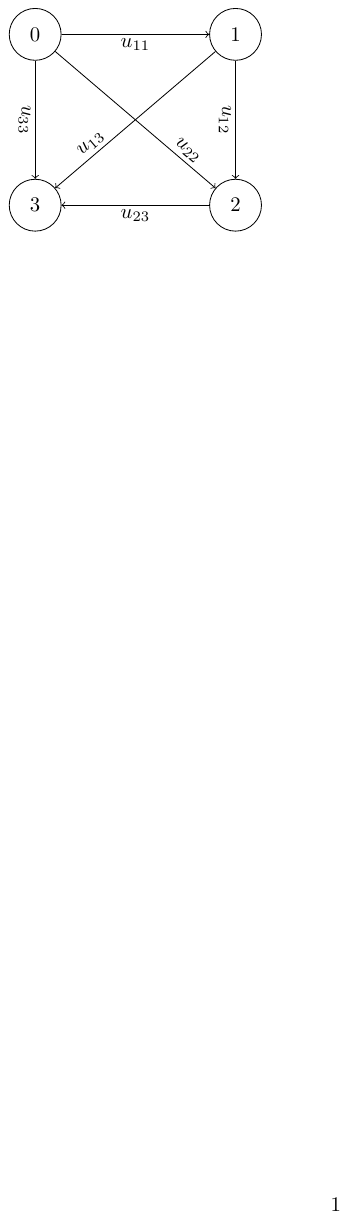}
    \caption{The matrix digraph $\Gamma_0$ corresponding to matrix $A$ in Eq. (\ref{eq:A_ex}).}
    \label{fig:upper-triangular-digraph}
\end{figure}

No two vertices in the graph in Fig. \ref{fig:upper-triangular-digraph} are
strongly connected.  It is therefore possible by Remark \ref{remark:move}
to move the source of any arc to the root vertex 0.
By Remark \ref{remark:combine} we may then combine each set of parallel
arcs into a single
arc with weight equal to the sum of the individual arc weights.
The resulting digraph
is shown in Fig. \ref{fig:upper-moved-digraph}.  The product of the
arc weights gives the expected determinant.
\begin{figure}[!h]
   \centering
   \includegraphics[width=0.4\textwidth]{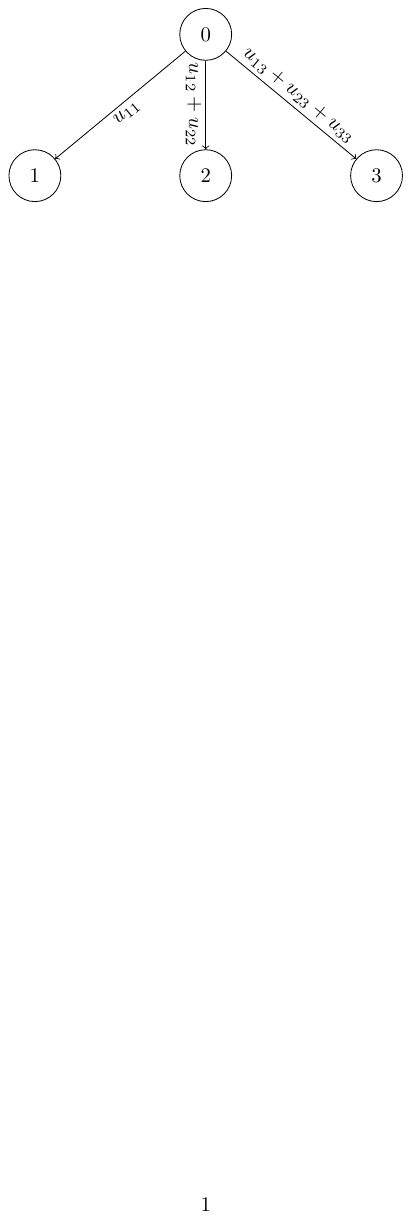}
    \caption{The matrix digraph $\Gamma_0$ after moving arcs.}
    \label{fig:upper-moved-digraph}
\end{figure}

The digraph in Fig. \ref{fig:upper-moved-digraph} corresponds to a diagonal
matrix $A'$ with elements $a_{11}' = u_{11}$, $a_{22}' = u_{12} + u_{22}$,
and $a_{33}' = u_{13} + u_{23} + u_{33}$.  Matrix $A'$ may be obtained from
matrix $A$ through three column sum operations.  The first is to add the
factor $u_{12} / u_{11}$ times the first column to the second column in $A$.
Two more such operations produce $A'$.
This illustrates Remark \ref{remark:final_move}.

\begin{remark}
    By Remark \ref{remark:final_move}, any digraph corresponding to a matrix related to the original matrix by a series of row or column operations will have the same sum over arborescence weights as the digraph corresponding to the original matrix.  In general, a single row or column operation would correspond to changing the weights of several arcs in the digraph.  In Theorems \ref{theorem:move} and \ref{theorem:add} (and Remarks \ref{remark:move} and \ref{remark:combine}), we focus on digraphs obtained from the original digraph by moving the source of a single arc or combining a single pair of parallel arcs.
    \label{remark:other_related_digraphs}
\end{remark}

\section{Graphical Calculation of a Matrix Determinant}

In this section we consider a graphical vertex-isolation approach that uses Remarks \ref{remark:move} and \ref{remark:combine} to compute a matrix determinant graphically from Theorem \ref{theorem:matrix-tree}.
We begin with a definition.

\begin{definition}
    Consider a digraph $\Gamma_0$ with root vertex 0.  A subgraph of $\Gamma_0$ is {\bf rooted} at vertex $v$ if it has arc $(0,v)$ and there are no other in arcs to $v$.
\end{definition}

Each arborescence in the sum over all arboresences of the digraph $\Gamma_0$ corresponding to a non-singular matrix must be rooted at one or more vertices.   Consider now two subgraphs of $\Gamma_0$.  The first is $\Gamma_0^{(1)}$, which is rooted at vertex 1. The second is $\Gamma_0^{({\bar 1})}$, which is not rooted at vertex 1.  To contribute arborescences, $\Gamma_0^{({\bar 1})}$ must, of course, be rooted at some other vertex or vertices than $1$.  All arborescences of $\Gamma_0$ that are rooted at vertex $1$ are arborescences of $\Gamma_0^{(1)}$ since no arborescence of $\Gamma_0^{({\bar 1})}$ can include the arc $(0, 1)$.  Similary all arborescences of $\Gamma_0$ that are not rooted at $1$ must be arborescences of $\Gamma_0^{({\bar 1})}$ since arborescences of $\Gamma_0^{(1)}$ must include the arc $(0, 1)$.  This means the sum over arborescences of $\Gamma_0$ will be equal to the sum of arborescences over $\Gamma_0^{(1)}$ and $\Gamma_0^{({\bar 1})}$.

We may proceed further converting $\Gamma_0^{({\bar 1})}$ into two new digraphs, $\Gamma_0^{(2)}$ and $\Gamma_0^{({\bar 2})}$.  $\Gamma_0^{(2)}$ is rooted at vertex 2 and not rooted at vertex 1, since it derives from $\Gamma_0^{({\bar 1})}$.  $\Gamma_0^{({\bar 2})}$ is not rooted at vertices 1 and 2.  We repeat this procedure until we have $n$ digraphs $\Gamma_0^{(1)}$, ..., $\Gamma_0^{(n)}$.  The digraph $\Gamma_0^{(j)}$ is rooted at vertex $j$ and not rooted at vertices $i$ such that $i < j$.
This means the sum over arborescence weights of $\Gamma_0$ will be equal to the sum of arborescence weights of $\Gamma_0^{(1)}$, $\Gamma_0^{(2)}$, ..., $\Gamma_0^{(n)}$.

We may further factor by an isolation procedure.  Consider the digraph $\Gamma_0^{(j)}$.  Since it is rooted at vertex $j$, vertex $j$ is not strongly connected to any other vertex in $\Gamma_0^{(j)}$.  By Remark \ref{remark:move}, the source of any arc $(j, k)$ may thus be moved from vertex $j$ to the root arc 0 since that vertex is not strongly connected to any other vertex in the graph.  The result is that vertex $j$ is now isolated (no out arcs and the only in arc is $(0, j)$).  If vertex $k > j$, it may have initially been rooted in $\Gamma_0^{(j)}$, so there are now two arcs $(0, k)$, the original one and the one that was moved.  By Remark \ref{remark:combine}, combine these into a single arc with weight equal to the sum of the weights of the two arcs.  If vertex $k < j$, it was not initially rooted in $\Gamma_0^{(j)}$, so simply move $(j, k)$ to $(0, k)$ and leave the weight the same.  Doing this for all $k \in \{1, ..., n\}, k \ne j$ leaves a modified $\Gamma_0^{(j)}$ that now has isolated vertex $j$ and rooted vertices $k \ne j$.  This digraph may be split into two digraphs: digraph $\Gamma_0^{(j),(1)}$ that is isolated at vertex $j$ and rooted at vertex 1, and $\Gamma_0^{(j),({\bar 1})}$ that is isolated at vertex $j$ and not rooted at vertex 1.  One can proceed as before and generate the $n-1$ digraphs $\Gamma_0^{(j), (k)}$ with $k \ne j$.  For $\Gamma_0^{(j), (k)}$, isolate vertex $k$ by moving and combining arcs.  Repeat this procedure until all digraphs are fully isolated (that is, only have arcs $(0, k)$ for all vertices $k \ne 0$).  There will be $n!$ such digraphs, and the sum over weights of these arborescences will be the determinant of the original matrix.

As an example of the vertex-isolation procedure, consider the following matrix $M$:
\begin{equation}
    M = \begin{pmatrix}
u_{11} + u_{21} + u_{31} & -u_{12} & -u_{13}\\
-u_{21} & u_{12} + u_{22} + u_{32} & -u_{23} \\
-u_{31}  & -u_{32} & u_{13} + u_{23} + u_{33}
\end{pmatrix}
\label{eq:M}
\end{equation}
The digraph $\Gamma_0$ corresponding to the matrix is shown in
Fig. \ref{fig:matrix-digraph}.
\begin{figure}[!h]
   \centering
   \includegraphics[width=0.35\textwidth]{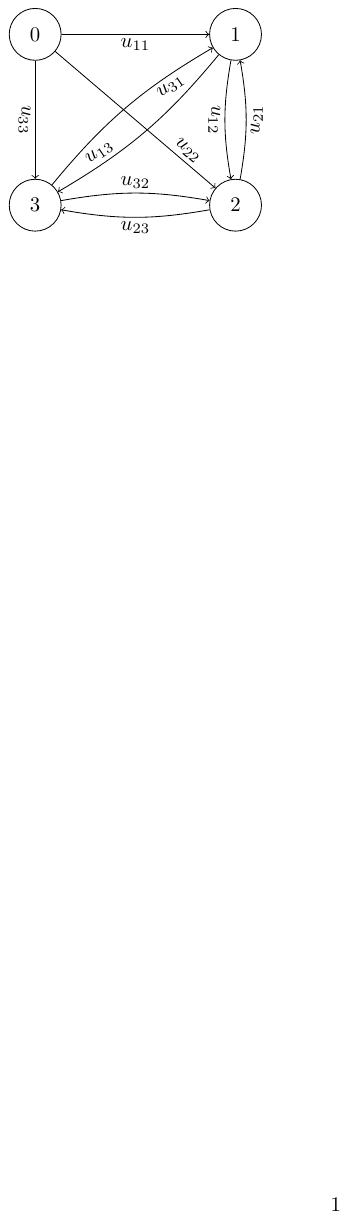}
    \caption{The matrix digraph $\Gamma_0$ corresponding to matrix $M$ in Eq. (\ref{eq:M}).}
    \label{fig:matrix-digraph}
\end{figure}

To compute the determinant of $M$, first root $\Gamma_0$
at vertex 1.  This results in digraphs $\Gamma_0^{(1)}$ and
$\Gamma_0^{(\bar{1})}$.  From $\Gamma_0^{(\bar{1})}$, derive
$\Gamma_0^{(2)}$ and $\Gamma_0^{(\bar{2})}$.  Finally from
$\Gamma_0^{(\bar{2})}$, derive $\Gamma_0^{(3)}$.  The resulting digraphs are
shown in Fig. \ref{fig:explicit}.

\begin{figure}[!h]
    \centering
    \begin{subfigure}[b]{0.25\textwidth}
        \centering
        \includegraphics[width=\textwidth]{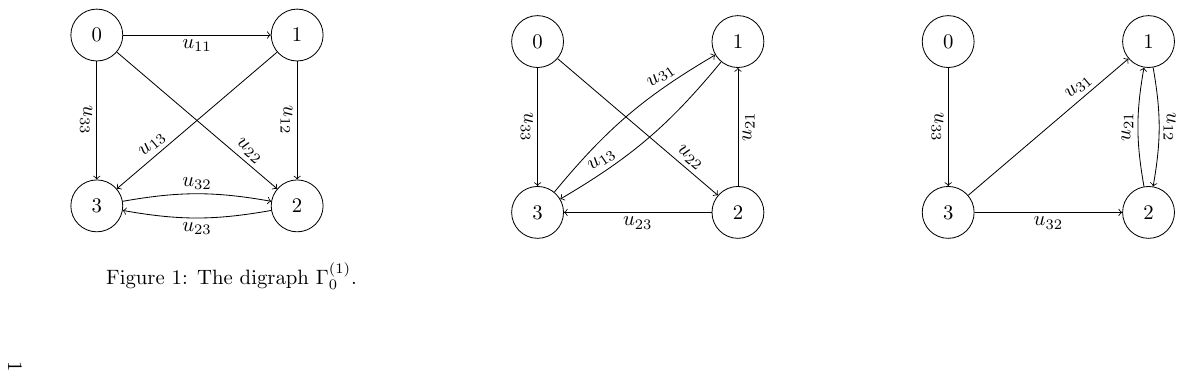}
        \caption{$\Gamma_0^{(1)}$}
        \label{fig:Gamma_01}
    \end{subfigure}
    \hfill
    \begin{subfigure}[b]{0.25\textwidth}
        \centering
        \includegraphics[width=\textwidth]{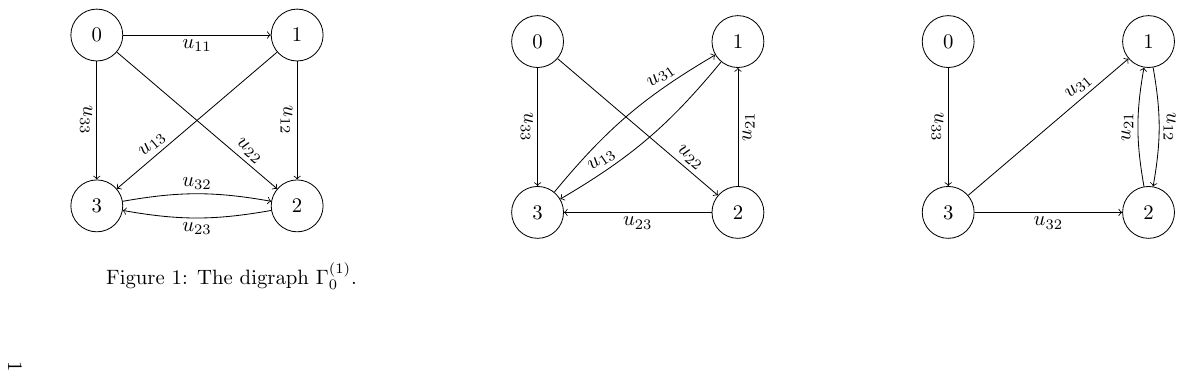}
        \caption{$\Gamma_0^{(2)}$}
        \label{fig:Gamma_02}
    \end{subfigure}
    \hfill
    \begin{subfigure}[b]{0.25\textwidth}
        \centering
        \includegraphics[width=\textwidth]{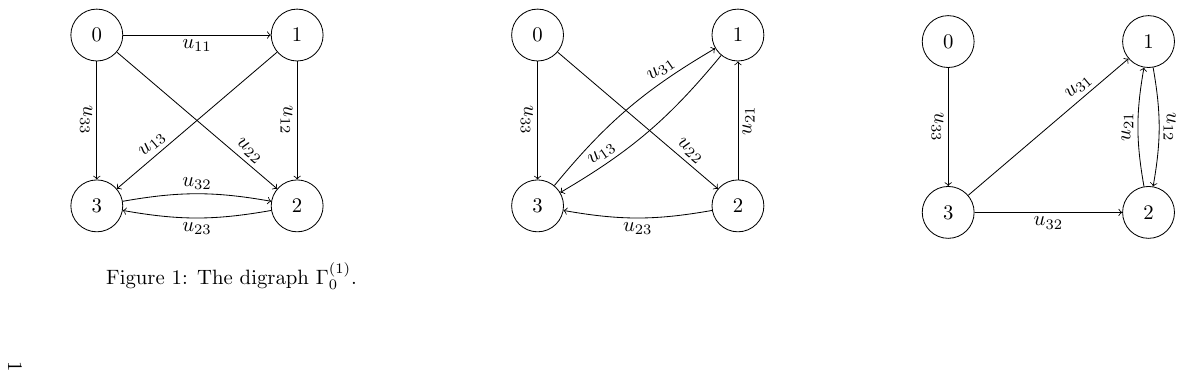}
        \caption{$\Gamma_0^{(3)}$}
        \label{fig:Gamma_03}
    \end{subfigure}
    \caption{The digraphs resulting from sequential rooting of $\Gamma_0$ in Fig. \ref{fig:matrix-digraph}.}
    \label{fig:explicit}
\end{figure}

\begin{figure}[!h]
    \centering
    \begin{subfigure}[b]{0.25\textwidth}
        \centering
        \includegraphics[width=\textwidth]{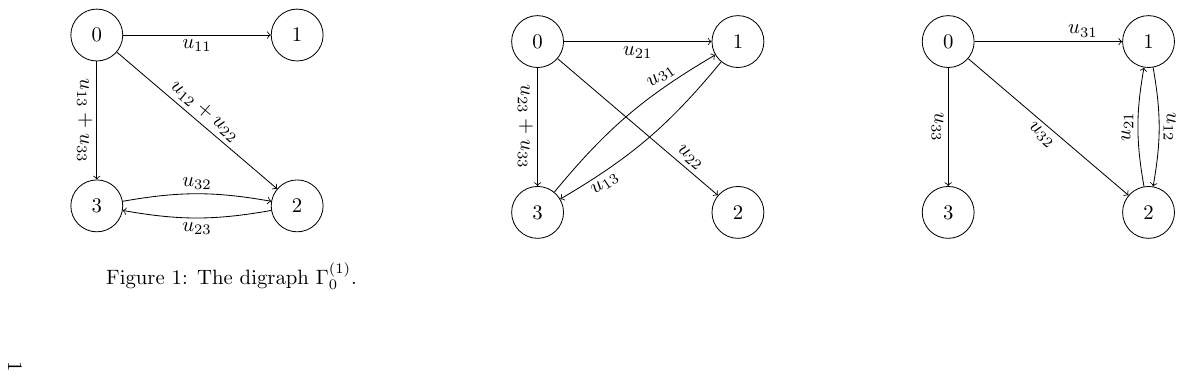}
        \caption{$\Gamma_0^{(1)}$}
        \label{fig:Gamma_01}
    \end{subfigure}
    \hfill
    \begin{subfigure}[b]{0.25\textwidth}
        \centering
        \includegraphics[width=\textwidth]{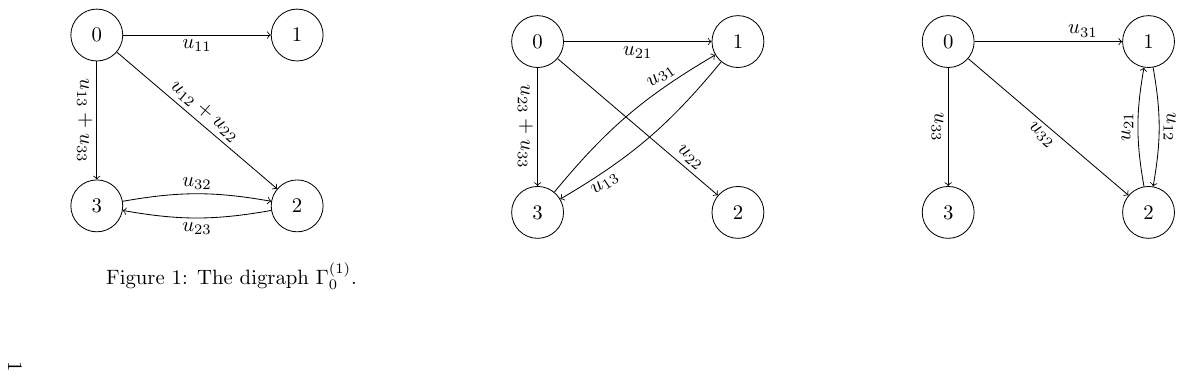}
        \caption{$\Gamma_0^{(2)}$}
        \label{fig:Gamma_02}
    \end{subfigure}
    \hfill
    \begin{subfigure}[b]{0.25\textwidth}
        \centering
        \includegraphics[width=\textwidth]{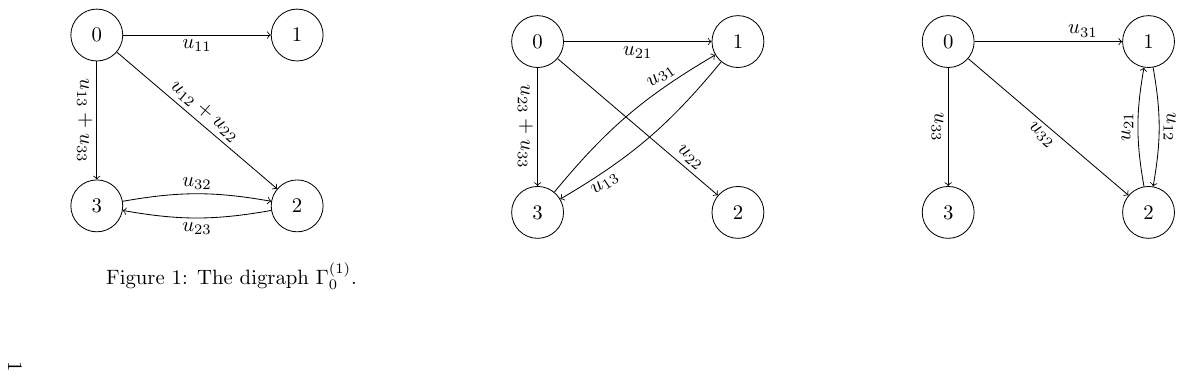}
        \caption{$\Gamma_0^{(3)}$}
        \label{fig:Gamma_03}
    \end{subfigure}
    \caption{The digraphs of Fig. \ref{fig:explicit} after the isolation procedure.}
    \label{fig:isolation}
\end{figure}

\begin{figure}[!h]
   \centering
   \includegraphics[width=\textwidth]{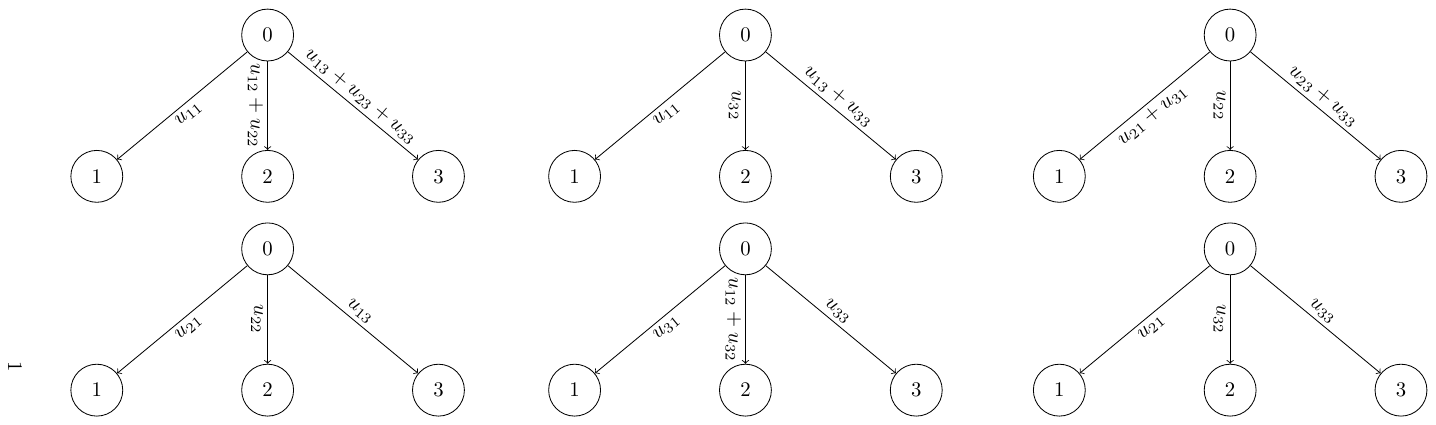}
    \caption{The fully isolated digraphs.}
    \label{fig:isolated}
\end{figure}

Once we have created the rooted subgraphs of the original matrix digraph $\Gamma_0$, we perform the isolation
procedure on each subgraph by finding the arcs whose source and target are not strongly
connected and moving the source to vertex 0 and combining parallel arcs.  The resulting digraphs are
shown in Fig. \ref{fig:isolation}.  We then carry out subsequent
rooting and isolation of vertices in each of the subgraph $\Gamma_0^{(1)}$,
$\Gamma_0^{(2)}$, and $\Gamma_0^{(3)}$.

The $3! = 6$ fully isolated digraphs resulting from the isolation procedure described above are shown in Fig. \ref{fig:isolated}.  
The resulting determinant is the sum over the weights of the six isolated graphs in Fig. \ref{fig:isolated}:
\begin{equation}
    \begin{split}
    det(M) &= u_{11}(u_{12}+u_{22})(u_{13}+u_{23}+u_{33}) + u_{11} u_{32}(u_{13}+u_{33}) \\
    &+ (u_{21}+u_{31})u_{22}(u_{33} + u_{23}) + u_{21} u_{22} u_{13}\\
    &+ u_{31}(u_{12}+u_{32})u_{33} + u_{21}u_{32}u_{33} 
\end{split}
\label{eq:isolation}
\end{equation}

In carrying out the isolation procedure, we always chose to isolate first the vertex with the smallest index.  Other fully isolated digraphs would result from a different order of vertex isolating, which would result in a sum over different terms.  Nevertheless, the sum over all isolated digraph weights, the determinant, would still be the same.

An alternative to the sequential rooting strategy above would be to create all the possible rooted subgraphs directly. Each of these subgraphs would be rooted at some set of vertices and not rooted at the remaining vertices.  Any arborescence of the original graph would derive uniquely from one of these subgraphs since it would have a unique set of rooted vertices.  To compute the number of rooted subgraphs, we partition the vertices into two non-empty subsets, one of rooted vertices and one of non-rooted vertices.  The root vertex 0 itself would belong to the latter subset since the graph has no loops.  Thus, the number of such partitions, and, hence, the number of rooted subgraphs, would be the Stirling number of the second kind $S(n+1, 2)$, where $n+1$ is the number vertices in the graph (including the root vertex 0).  This is $2^n - 1$.  For each of these subgraphs, one would move the source of arcs originating at the rooted vertices to the vertex 0 and combine them with the existing arcs.  This would isolate the rooted vertices.  One would then create all the rooted subgraphs of this subgraph, isolate each of those subgraphs, and then repeat until the resulting subgraphs are all fully isolated and thus arborescences.  The determinant is the sum over the weights of these arborescences.  Since each step of vertex partitioning and isolation introduces an ordering of the vertices (according to the step at which the vertex gets rooted), each arborescence corresponds to a weak ordering of the vertices.  The total number of arborescences is thus the ordered Bell number for the given $n$.  For example, this procedure results in 13 arborescences for the matrix digraph in Fig. \ref{fig:matrix-digraph}.

Though not competitive in time or memory complexity with standard techniques such as LU decomposition for numerical computation of matrix determinants, the sequential and partitioned rooting/vertex-isolation strategies provide interesting ways of factoring matrix determinants and are amenable to recursive and/or parallel programming.  We provide Python codes to illustrate computation of matrix determinants by our graphical vertex-isolation approaches along with some other matrix-tree based methods \cite{Ghosh_Matrix_Digraph_2023}.



\bibliographystyle{plain}
\bibliography{clemson}

\end{document}